
\documentclass[11pt]{amsart}
\usepackage{amsfonts,amssymb,latexsym,amsmath, amsxtra}
\usepackage[all]{xy}
\usepackage[dvips]{graphics}

\pagestyle{myheadings}

\textheight=8.5 true in \textwidth=6.5 true in \hoffset=-0.8true in

\usepackage[OT2,T1]{fontenc}
\DeclareSymbolFont{cyrletters}{OT2}{wncyr}{m}{n}
\DeclareMathSymbol{\Sha}{\mathalpha}{cyrletters}{"58}

\theoremstyle{plain}
\newtheorem{theorem}{Theorem}[section]
\newtheorem{corollary}[theorem]{Corollary}

\newtheorem{lemma}[theorem]{Lemma}
\newtheorem{proposition}[theorem]{Proposition}
\newtheorem*{conjecture*}{Conjecture}

\theoremstyle{definition}

\theoremstyle{remark}
\newtheorem*{remark}{Remark}
\newtheorem*{remark*}{Remark}

\numberwithin{equation}{section}

\newcommand{\N}{\mathbb N}
\newcommand{\Z}{\mathbb Z}
\newcommand{\C}{\mathbb C}
\newcommand{\T}{\mathbb T}
\newcommand{\cC}{\mathcal{C}}

\newcommand{\cM}{\mathcal{M}}

\def\H{\mathbb H}

\def\calM{\mathcal{M}}

\def\SL{\rm SL}

\def\vt{\vartheta}

\def\({\left(}
\def\){\right)}

\newcommand{\re}[1]{\text{Re}\(#1\)}
\newcommand{\im}[1]{\text{Im}\(#1\)}

\newcommand{\abs}[1]{\left|#1\right|}

\newcommand{\crank}{\text{crank}}

\def\k2{\frac{k}{2}}

\begin{document}

\title[The Crank Statistic for Partitions]
{Families of Quasimodular Forms and Jacobi Forms: \\
The Crank Statistic for Partitions}

\author{Robert C. Rhoades}
\address{Department of Mathematics \\Stanford University \\ Stanford, CA 94305 \\ U.S.A. }
\email{rhoades@math.stanford.edu}

\thanks {The author is supported by
an NSF Mathematical Sciences Postdoctoral Fellowship.}
\subjclass[2000] {11P82,  05A17, 11P84, 11P55}

\date{\today}

\keywords{Jacobi forms; quasimodular forms; crank; partition statistic; partitions}

\begin{abstract}
Families of quasimodular forms arise naturally in many situations such as curve counting on Abelian surfaces
and counting ramified covers of orbifolds. In many cases the family of
quasimodular forms naturally arises as  the coefficients  of a Taylor expansion of a Jacobi form. 
In this note we give examples
of such expansions that arise in the study of partition statistics. 

The crank partition statistic has gathered much interest recently.   For instance, Atkin and Garvan
showed that the generating functions for the moments of the crank statistic are quasimodular forms. 
The two variable generating function for the crank partition statistic is a Jacobi form. 
Exploiting the structure inherent in the  
Jacobi theta function we construct explicit expressions for the functions of Atkin and Garvan.   
Furthermore, this perspective opens the door for further investigation including a study of the moments in 
arithmetic progressions.  We conduct a thorough study of the crank statistic restricted to a residue class modulo 2.   
\end{abstract}

\maketitle


\section{Introduction}
A well known and useful fact is that the graded ring of holomorphic modular forms for $\SL_2(\Z)$, 
denoted $M_*(\SL_2(\Z))$,
is generated by the weight 4 and weight 6 Eisenstein series
$E_4(\tau)$ and $E_6(\tau)$, $\tau\in \H = \{z \in \C: \im{z}>0\}$.  
The weight $k$ Eisenstein series on $\SL_2(\Z)$, for $k\ge2$ even, is given by 
\begin{equation}
E_k(\tau) := 1-\frac{2k}{B_k}  \sum_{n=1}^\infty\sigma_{k-1}(n)q^n,
\end{equation}
with  $q :=e^{2\pi i \tau}$, $\sigma_{k-1}(n):= \sum_{d\mid n} d^{k-1}$, and $B_k$ the $k$th Bernoulli number.

On the other hand, $E_2(\tau)$ is 
not a modular form, but a quasimodular form.
Quasimodular forms are elements of the smallest algebra which contains the classical modular forms and 
which is closed under differentiation $\frac{1}{2\pi i} \frac{d}{d\tau} = \frac{1}{q} \frac{d}{dq}$. 

Families of quasimodular forms arise naturally throughout mathematics.  The following are three such examples.
\begin{enumerate}
\item Dijkgraaf \cite{D} and Kaneko and Zagier \cite{KZ}
 proved that for each $g>1$ the generating function for the 
 number of genus $g$ and degree $d$ covers of an elliptic curve with prescribed ramification is a quasimodular form.
The family of generating functions indexed by the genus $g$ is a family of quasimodular forms. 

\item For a lattice $L\subset \C$ let $\T^2 = \C/L$. 
Eskin and Okounkov proved that there is a two-parameter family of 
 quasimodular forms counting ramified covers of the pillowcase orbifold $\T^2/\pm 1$ formed by taking the quotient 
 by the automorphism $z\mapsto -z$. 
 The family of quasimodular forms is indexed by 
 a partition $\mu$ and  a partition of an even number into odd parts $\nu$.  The 
 generating function for the number of degree $d$ covers with ramification data determined by $\mu$ and $\nu$,
  $Z(\mu, \nu;q)$, is a  quasimodular form.
   
\item Andrews and Rose \cite{AR} proved there is a one parameter family of quasimodular forms 
 arising from a curve counting problem on Abelian surfaces.  They show that for each genus $g$,
 the generating function for the number of hyperplane sections which are hyperelliptic curves of genus $g$
 with $\delta$ nodes on a generic polarized abelian surface is a quasimodular form. 
\end{enumerate}

For each of these families there is a Jacobi form that may be viewed as the generating function for 
the family of quasimodular forms.  Precisely, each of these families arise as the coefficients of a Taylor 
expansion of a Jacobi form.  

Jacobi forms may be understood as two variable automorphic forms satisfying an elliptic 
transformation and a modular transformation. 
Eichler and Zagier (Theorem 3.1 of  \cite{EZ85}) 
have shown 
that a suitable ``correction'' to each Taylor coefficient with respect to the elliptic variable (see Section \ref{sec:Crank})
of a Jacobi form is a modular form.
In Eskin and Okounkov's work \cite{EO} and in our context (see Section \ref{sec:Crank}) 
the Taylor expansion with respect to the elliptic variable
yields arithmetically interesting generating functions.  
In both of these cases, the
uncorrected Taylor coefficients are of interest.  The uncorrected coefficients are, generally, 
quasimodular forms of mixed weight rather than modular forms of a fixed weight.

In this note the crank statistic for partitions is presented as an example of tis general phenomenon. 
Our focus is to produce explicit expressions for the generating functions of the moments of these statistics and 
the moments restricted to arithmetic progressions.  
As a consequence we deduce congruences for the 
coefficients of the moment generating functions. 
Furthermore, we exploit the structure of the full Jacobi form to give asymptotics for the moments themselves. 

%



\section{Moments of the Crank Paritition Statistic}\label{sec:Crank}
Dyson \cite{Dy44} conjectured the existence of a statistic, the ``crank'',
that would provide a combinatorial explanation of 
Ramanujan's congruences for the partition function modulo 5, 7, and 11. 
Garvan \cite{Gar88} found the crank
statistic, and together with Andrews \cite{AndG88}, presented the following
definition.  Let $o(\lambda)$ denote the number of ones in
$\lambda$,
and  $\mu(\lambda)$
denote the number of parts strictly larger than $o(\lambda).$  The crank of $\lambda$ is defined by 
\begin{equation}\label{definecrank}
\crank(\lambda) := \begin{cases} \text{largest part of} \; \lambda \qquad &
\text{if} \; o(\lambda) = 0 \\
\mu(\lambda) - o(\lambda) & \text{if} \; o(\lambda) > 0
\end{cases}.
\end{equation}
Let $\calM(m,n)$ be the number of partitions of $n$
with crank $m$. The two-parameter
generating function may be written as  \cite{AndG88, AS54}
\begin{align*}
C(x;q) & := \sum_{\substack{m \in \Z \\ n \geq 0}} \calM(m,n) x^m q^n =
\prod_{n \geq 1} \frac{1-q^n}{(1-xq^n)(1-x^{-1}q^n)}.
\notag
\end{align*}

The study of the crank, and other partition statistics, has led to a better understanding of the partition function.  
For instance, Mahlburg \cite{M} showed that when $x$ is specialized to be a root of unity, $C(x;q)$ is,
 up to a power of $q$,
a modular form.  
He then used the theory of modular forms to establish 
congruences for the crank statistic,  resulting in infinitely many combinatorial congruences for the 
partition function.  

Atkin and Garvan \cite{AG03} studied the distribution of the crank statistic by considering its moments. 
For a nonnegative integer $k$, define the $k$-th  crank moment as
\begin{align}\label{defineRC}
M_k(n):=\sum_{m \in \Z} m^k\, \calM(m,n).
\end{align}
They showed that for odd $k$ $M_k(n) =0$ and  
for each $\ell \in \N$ the generating function
\begin{align}
C_{2\ell}(q) := \sum_{n\geq 0} M_{2\ell}(n)q^{n} 
\end{align}
is a quasimodular form. 
Atkin and Garvan construct this family of quasimodular forms recursively. 
  
We show that this family of generating functions naturally appears as the Taylor coefficients 
of a Jacobi form.  
Let $x := e^{2\pi i u}$ and $q := e^{2\pi i \tau}$. Abusing notation, 
$q^{-\frac{1}{24}} \sin(\pi u)^{-1} C(u; \tau) = q^{-\frac{1}{24}} \sin(\pi u)^{-1} C(x;q)$ is 
a weight $\frac{1}{2}$ index $-\frac{1}{2}$ Jacobi form
on $\SL_2(\Z)$ with multiplier.
With $Z$ a complex number let
\begin{equation}
\cC(Z;q) := \sum_{k=0}^\infty C_k(q) \frac{Z^k}{k!}
\end{equation}
be the exponential generating functions for the crank moment
generating functions.
Rearranging the order of summation we see that 
$$\cC(Z;q) = \sum_{ n\ge 0} \sum_{m\in \Z} \cM(m,n) q^n \sum_{k\ge 0} 
\frac{(Zm)^k}{k!} = \sum_{n\ge 0} \sum_{m\in \Z} \cM(m,n) (e^Z)^m q^n    = C\(e^{Z}; q\).$$  
Hence the Taylor expansion with respect to $u$ 
of the Jacobi form $C(u;\tau)$ encodes the crank moments.  

In this note, we focus on concrete instances and describe the Taylor coefficients, and hence moment generating 
functions, explicitly. 
Define 
\begin{equation}
\Phi_{k-1}(\tau) :=  \frac{B_k}{2k} \(1-E_k(\tau)\) = \sum_{n=1}^\infty \sigma_{k-1}(n)q^n.
\end{equation}
\begin{theorem}\label{thm:ag4.8}For $\ell \ge 1$ we have 
$$C_{2\ell }(q) = \frac{1}{(q)_\infty} \sum_{1\le k \le \ell} \sum_{
\begin{subarray}{c} i_1 + \cdots + i_k = \ell \\ i_1, i_2, \cdots, i_k >0 \end{subarray}} 
 \frac{2^k(2\ell)!}{k! (2i_1)!\cdots (2i_k)!} \Phi_{2i_1-1}(\tau) \Phi_{2i_2-1}(\tau)\cdots \Phi_{2i_k-1}(\tau),$$
 with $(q)_\infty := (q;q)_\infty$ and $(x;q)_\infty := \prod_{n\ge 0} (1-xq^{n})$.
 Hence $(q)_\infty C_{2\ell}(q)$ is a mixed weight quasimodular form on $\SL_2(\Z)$
with highest weight $2\ell$. 
\end{theorem}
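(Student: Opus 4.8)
The plan is to pass to the logarithm of the two--variable generating function, where the infinite product defining $C(x;q)$ collapses to a Lambert--type series. Writing $x=e^Z$ and separating the numerator gives
\[
\cC(Z;q)=C(e^Z;q)=(q)_\infty\prod_{n\ge 1}\frac{1}{(1-e^Zq^n)(1-e^{-Z}q^n)}.
\]
First I would take $\log$ of the product and apply $-\log(1-w)=\sum_{r\ge1}w^r/r$, then sum the resulting geometric series in $n$:
\[
\log\frac{\cC(Z;q)}{(q)_\infty}
=\sum_{n\ge1}\sum_{r\ge1}\frac{(e^{rZ}+e^{-rZ})q^{rn}}{r}
=\sum_{r\ge1}\frac{2\cosh(rZ)}{r}\,\frac{q^r}{1-q^r}.
\]
Everything here is an identity of formal power series in $q$ and $Z$, so no convergence questions arise; the evenness of $\cosh$ also records the symmetry $Z\mapsto-Z$ (equivalently $x\mapsto x^{-1}$) responsible for the vanishing of the odd moments.

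Next I would insert $2\cosh(rZ)=2\sum_{j\ge0}(rZ)^{2j}/(2j)!$ and interchange summation, recognizing the divisor sums
\[
\sum_{r\ge1}r^{\,2j-1}\frac{q^r}{1-q^r}=\sum_{N\ge1}\sigma_{2j-1}(N)q^N=\Phi_{2j-1}(\tau)\qquad(j\ge1).
\]
The $j=0$ part is independent of $Z$ and equals $-2\log(q)_\infty$; exponentiating it produces $(q)_\infty^{-2}$, which combines with the prefactor $(q)_\infty$ to leave a single factor in the denominator,
\[
\cC(Z;q)=\frac{1}{(q)_\infty}\exp\!\left(\sum_{j\ge1}\frac{2\,\Phi_{2j-1}(\tau)}{(2j)!}\,Z^{2j}\right).
\]
Then I would expand the exponential as $\sum_{k\ge0}(\,\cdot\,)^k/k!$ and raise the inner sum to the $k$th power, a multinomial expansion indexed by $(i_1,\dots,i_k)$ with each $i_t>0$; the coefficient of $Z^{2\ell}$ selects those tuples with $i_1+\cdots+i_k=\ell$. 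Since $\cC(Z;q)=\sum_k C_k(q)Z^k/k!$, reading off $C_{2\ell}(q)=(2\ell)!\,[Z^{2\ell}]\cC(Z;q)$ reproduces the factors $2^k$, $1/k!$, $(2\ell)!$ and $(2i_t)!^{-1}$ in the asserted formula, with the outer sum truncated to $1\le k\le\ell$ because $k$ positive parts cannot sum to fewer than $k$.

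For the modularity claim I would use the identity $\Phi_{2i-1}=\frac{B_{2i}}{4i}(1-E_{2i})$. Since $E_2$ is quasimodular of weight $2$ and each $E_{2i}$ with $i\ge2$ is modular of weight $2i$, every $\Phi_{2i-1}$ is quasimodular with top--weight part of weight $2i$, its constant contribution $\frac{B_{2i}}{4i}$ being weight $0$. As the quasimodular forms form a graded algebra closed under products, each term $\Phi_{2i_1-1}\cdots\Phi_{2i_k-1}$ in the formula is quasimodular of top weight $2(i_1+\cdots+i_k)=2\ell$, with strictly lower--weight pieces arising only when one replaces a factor $(1-E_{2i_t})$ by its constant term. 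Hence $(q)_\infty C_{2\ell}(q)$ is a mixed--weight quasimodular form whose highest possible weight is $2\ell$. The only delicate points are the bookkeeping---checking that the exponential and multinomial expansion reproduce the precise combinatorial coefficients---and confirming that the weight $2\ell$ is genuinely attained, i.e.\ that the sum over the contributing compositions of the top--weight products $\prod_t\bigl(-\frac{B_{2i_t}}{4i_t}E_{2i_t}\bigr)$ is a nonzero weight--$2\ell$ quasimodular form. The latter is readily checked through the realization of quasimodular forms as the polynomial ring $\C[E_2,E_4,E_6]$ and is already visible in small cases such as $\ell=1,2$.
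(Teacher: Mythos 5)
Your proposal is correct and is essentially the paper's own argument: the paper packages the identical logarithm/Lambert-series computation into Proposition \ref{prop:jacobi} (applied to the triple product for $\vartheta$, then combined with \eqref{eqn:crank}) and the multinomial extraction into Lemma \ref{lem:basic}, whereas you carry out the same two steps directly on the product formula for $C(x;q)$. The only point you address that the paper leaves implicit is that the weight-$2\ell$ component is genuinely nonzero, which indeed follows since the monomial $E_2^\ell$ occurs (with nonzero coefficient, only from the composition with all $i_t=1$) in the free polynomial ring $\C[E_2,E_4,E_6]$.
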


\begin{remark} 
Atkin and Garvan \cite{AG03} used a recurrence relation to deduce Theorem \ref{thm:ag4.8}.  However, 
the constants appearing in Theorem \ref{thm:ag4.8} are not given. 
\end{remark}

\section{Moments in Arithmetic Progressions}

This perspective opens the door for further exploration of the distribution of the crank. For example, 
we may investigate the crank moments in arithmetic progressions, such as 
$$\sum_{m\equiv A\pmod{B}} 
m^k \cM(m,n)$$
for any nonnegative integers $A$ and $B$.  In this section we give some results in the case $B=2$. 

The case $k=0$ and arbitrary $B$ was considered by Mahlburg \cite{M}.  The cases for $k=0$ and 
$B = 2, 3, 4$ have been studied explicitly by a number of authors, namely Andrews and Lewis \cite{AL}
and Choi, Kang, and Lovejoy \cite{CKL}. 
To understand these moments it is enough to have an explicit description of the twisted crank moments
\begin{align}
M_k(\zeta,n) := \sum_{m\in \Z} \zeta^m m^k \cM(m,n),
\end{align}
with $\zeta$ a root of unity.
Define the twisted crank moment generating function by 
\begin{align}
C_{k}(\zeta,q) := \sum_{n\geq 0} M_{k}(\zeta,n)q^{n}.
\end{align}
For concreteness, we consider the case when $\zeta =-1$ and give the 
 following explicit description of these moment generating functions.
\begin{theorem}\label{thm:twist}
For $k\ge 1$ define 
\begin{equation}\label{eqn:Fdef}
F_{2k}(\tau): = 2^{2k} \Phi_{2k-1}(2\tau) - \Phi_{2k-1}(\tau).
\end{equation}
For $\ell \ge 1$ we have 
$$C_{2\ell}(-1,q) = (q)_\infty(-q)_\infty^2 \sum_{1\le k \le \ell} \ \sum_{\begin{subarray}{c}
i_1 + \cdots + i_k = \ell \\ i_1, i_2, \cdots, i_k >0 \end{subarray}} 
 \frac{2^k(2\ell)!}{k! (2i_1)!\cdots (2i_k)!} F_{2i_1}(\tau) F_{2i_2}(\tau)
 \cdots F_{2i_k}(\tau).$$
Hence $\frac{ C_{2\ell}(-1,q)}{(q)_\infty(-q)_\infty^2}$ is a mixed weight quasimodular form on $\Gamma_0(2)$
with highest weight $2\ell$. 
\end{theorem}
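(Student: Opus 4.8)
The plan is to follow the derivation behind Theorem~\ref{thm:ag4.8} essentially verbatim, replacing the specialization $x = e^Z$ by $x = -e^Z$; the twist by $\zeta=-1$ corresponds precisely to shifting the elliptic variable of the Jacobi form $C(u;\tau)$ by the half-period $\tfrac12$, which is the structural reason the answer lives on $\Gamma_0(2)$ rather than $\SL_2(\Z)$. First I would record, exactly as in Section~\ref{sec:Crank}, that rearranging the order of summation gives
\begin{equation*}
\sum_{k\ge 0} C_k(-1,q)\,\frac{Z^k}{k!} \;=\; \sum_{n\ge 0}\sum_{m\in\Z}\cM(m,n)\,(-e^Z)^m q^n \;=\; C\!\left(-e^Z;q\right),
\end{equation*}
so the twisted moment generating functions are the even Taylor coefficients in $Z$ of $C(-e^Z;q)$.

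Next I would peel off the constant ($Z=0$) factor and take a logarithm. Since $x=-e^{\pm Z}$ turns each factor $1-xq^n$ into $1+e^{\pm Z}q^n$, the product formula gives
\begin{equation*}
C\!\left(-e^Z;q\right) \;=\; C(-1;q)\prod_{n\ge 1}\frac{(1+q^n)^2}{(1+e^Z q^n)(1+e^{-Z}q^n)},\qquad C(-1;q)=\prod_{n\ge1}\frac{1-q^n}{(1+q^n)^2}.
\end{equation*}
Taking $\log$ of the remaining product and using $\log(1+e^{\pm Z}q^n)=\sum_{m\ge1}\frac{(-1)^{m-1}}{m}e^{\pm mZ}q^{mn}$, the odd-in-$Z$ part cancels and the even part collects into $\cosh$:
\begin{equation*}
\log\prod_{n\ge1}\frac{(1+q^n)^2}{(1+e^Z q^n)(1+e^{-Z}q^n)} \;=\; 2\sum_{m,n\ge1}\frac{(-1)^{m-1}}{m}\bigl(\cosh(mZ)-1\bigr)q^{mn}.
\end{equation*}
Expanding $\cosh(mZ)-1=\sum_{j\ge1}\frac{m^{2j}}{(2j)!}Z^{2j}$ and grouping the monomials by $N=mn$ reduces the inner sum to the signed divisor sum $\sum_{m\mid N}(-1)^{m-1}m^{2j-1}$.

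The crux is then the arithmetic identity
\begin{equation*}
\sum_{N\ge1}\Bigl(\sum_{m\mid N}(-1)^{m-1}m^{2j-1}\Bigr)q^N \;=\; \Phi_{2j-1}(\tau)-2^{2j}\Phi_{2j-1}(2\tau)\;=\;-F_{2j}(\tau),
\end{equation*}
which I would prove by splitting each divisor sum by the parity of $m$: writing $\sum_{m\mid N}(-1)^{m-1}m^{2j-1}=\sigma_{2j-1}(N)-2\sum_{m\mid N,\,2\mid m}m^{2j-1}$ and evaluating the even part via $m=2m'$ as $2^{2j}\sigma_{2j-1}(N/2)$, so that summation over $N$ yields $\Phi_{2j-1}(\tau)-2^{2j}\Phi_{2j-1}(2\tau)$, i.e.\ $-F_{2j}$ by \eqref{eqn:Fdef}. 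This parity split — producing the doubled argument $2\tau$ — is the only genuinely new ingredient compared with the untwisted computation, and is where I expect the real work to lie.

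Combining these steps yields the closed exponential form $C(-e^Z;q)=C(-1;q)\exp\bigl(2\sum_{j\ge1}\frac{F_{2j}(\tau)}{(2j)!}Z^{2j}\bigr)$. Extracting $C_{2\ell}(-1,q)=(2\ell)!\,[Z^{2\ell}]\,C(-e^Z;q)$ by the exponential formula (write $\exp$ as $\sum_k\frac1{k!}(\cdots)^k$ and collect $Z^{2(i_1+\cdots+i_k)}$ with each $i_r\ge1$ and $i_1+\cdots+i_k=\ell$) produces the stated sum over compositions together with the multinomial constant $\frac{2^k(2\ell)!}{k!\,(2i_1)!\cdots(2i_k)!}$; the computation is formally identical to Theorem~\ref{thm:ag4.8} under $\Phi_{2i_r-1}\mapsto F_{2i_r}$ and $\tfrac1{(q)_\infty}\mapsto C(-1;q)$. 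For the final claim I would note that $E_{2j}(\tau)$ and $E_{2j}(2\tau)$ are quasimodular of weight $2j$ on $\Gamma_0(2)$ (modular for $j\ge2$, with $E_2$ the single quasimodular generator), hence so is each $F_{2j}$; therefore every product $F_{2i_1}\cdots F_{2i_k}$ is mixed-weight with highest weight $2\ell$, and after dividing out the eta-quotient prefactor $C(-1;q)$ the displayed sum is a mixed-weight quasimodular form on $\Gamma_0(2)$ of highest weight $2\ell$, as asserted.
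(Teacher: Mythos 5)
Your proposal is correct and follows essentially the same route as the paper: the paper's proof is just the specialization $\cC(Z+\pi i;q)=C(-e^{Z};q)$ followed by Proposition \ref{prop:jacobi} and Lemma \ref{lem:basic}, and your parity-split divisor-sum identity producing the doubled argument $2\tau$ is precisely the computation the paper carries out inside the proof of Proposition \ref{prop:jacobi} for $\vartheta(u+\tfrac12;\tau)$, merely relocated from the theta function to the crank product itself. Two small points. First, your displayed logarithm should carry $\frac{(-1)^{m}}{m}$ rather than $\frac{(-1)^{m-1}}{m}$, since $2-e^{mZ}-e^{-mZ}=-2\bigl(\cosh(mZ)-1\bigr)$; this sign slip cancels against the sign in your ``crux identity'' $\sum_{N}\bigl(\sum_{m\mid N}(-1)^{m-1}m^{2j-1}\bigr)q^{N}=-F_{2j}(\tau)$, so your final closed form $C(-e^{Z};q)=C(-1;q)\exp\bigl(2\sum_{j\ge1}F_{2j}(\tau)Z^{2j}/(2j)!\bigr)$ is correct. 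Second, the prefactor you obtain, $C(-1;q)=(q)_\infty(-q)_\infty^{-2}$, does not literally match the theorem's displayed $(q)_\infty(-q)_\infty^{2}$; yours is the correct one, as it agrees with the eta-quotient $q^{1/24}\eta(\tau)^{3}/\eta(2\tau)^{2}$ produced by \eqref{eqn:crank} together with Proposition \ref{prop:jacobi}, so the exponent in the statement appears to be a typographical error rather than a gap in your argument.
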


We give two applications of this perspective. First, 
since quasimodular forms are $p$-adic modular forms in the sense of Serre, for instance 
$E_2(\tau) \equiv E_{p+1}(\tau) \pmod{p}$,
it is routine to deduce infinitely many congruences for the moments (for instance, see \cite{tr}). 
\begin{corollary}
For every $\ell$ there exists infinitely many 
primes $p$ and infinitely many non-nested arithmetic progressions $\{An+B:n\in \N\}$
such that 
$$M_{2\ell}(-1,An+B) \equiv 0 \pmod{p}$$
for all $n\in \N$. 
\end{corollary}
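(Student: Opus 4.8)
The plan is to derive the congruences from the structural description in Theorem~\ref{thm:twist}, combined with the fact (recalled just before the statement) that quasimodular forms reduce modulo $p$ to genuine modular forms, and then to feed the resulting modular form into the Hecke-operator machinery of Serre and Treneer~\cite{tr}. First I would rewrite the right-hand side of Theorem~\ref{thm:twist}: by the structure theorem for quasimodular forms, the mixed-weight form $g(\tau) := C_{2\ell}(-1,q)\big/\big((q)_\infty(-q)_\infty^2\big)$ on $\Gamma_0(2)$ is a polynomial in $E_2(\tau)$ with holomorphic modular-form coefficients on $\Gamma_0(2)$. Fixing a prime $p\ge 5$ and applying the congruence $E_2(\tau)\equiv E_{p+1}(\tau)\pmod p$, I replace each power of $E_2$ by the corresponding power of the genuine modular form $E_{p+1}$; homogenizing the resulting weights by multiplying by suitable powers of $E_{p-1}\equiv 1\pmod p$ (which does not change the $q$-expansion mod $p$) shows that $g$ is congruent modulo $p$ to the $q$-expansion of a single holomorphic modular form on $\Gamma_0(2)$ of a fixed integral weight.

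Next I would reintroduce the prefactor. Since $(q)_\infty(-q)_\infty^2 = q^{-1/8}\,\eta(2\tau)^2/\eta(\tau)$ is, once the fractional $q$-power is absorbed into the $\eta$-factors, an eta-quotient of weight $\tfrac12$ on $\Gamma_0(2)$ with character and integer-exponent $q$-expansion, multiplying it against the form from the previous step yields that
$$C_{2\ell}(-1,q) = \sum_{n\ge 0} M_{2\ell}(-1,n)\,q^n$$
is congruent modulo $p$ to the $q$-expansion of a single holomorphic (half-integral weight) modular form $\mathcal F_p$ on some fixed $\Gamma_0(N)$, with $4\mid N$ as required by the half-integral-weight theory. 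The crucial point is that the Fourier coefficients of $\mathcal F_p$ are precisely the crank moments $M_{2\ell}(-1,n)$ modulo $p$, so any statement about vanishing of coefficients of $\mathcal F_p$ mod $p$ is directly a statement about the $M_{2\ell}(-1,n)$.

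With $\mathcal F_p$ in hand I would apply the main theorem of \cite{tr}: for a positive proportion of primes $Q$ in suitable residue classes modulo $Np$, the Hecke relation $\mathcal F_p\mid T_{Q^2}\equiv 0\pmod p$ forces the coefficients of $\mathcal F_p$ indexed by integers divisible by $Q$ but not by $Q^2$ to vanish modulo $p$ (the half-integral analogue of Serre's annihilation result). From this vanishing set one extracts honest arithmetic progressions: taking $m = Q^2 n + Qs$ with $s=1,\dots,Q-1$ gives, for each admissible $Q$, progressions $\{An+B\}$ on which $M_{2\ell}(-1,An+B)\equiv 0\pmod p$ for every $n\in\N$. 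Since the argument runs for every prime $p\ge 5$, and for each such $p$ the admissible primes $Q$ have positive density, choosing the $Q$ to be distinct increasing primes makes the moduli $A$ (essentially powers of distinct primes) pairwise non-dividing, so no one progression is contained in another, giving the required infinitely many non-nested progressions.

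The main obstacle is the bookkeeping around the prefactor rather than any deep new idea. One must verify that $(q)_\infty(-q)_\infty^2$ is genuinely a modular form of the claimed half-integral weight on a level $N$ to which the half-integral-weight Hecke theory of \cite{tr} applies, that the reduction $\mathcal F_p$ is holomorphic (or at worst weakly holomorphic, a case \cite{tr} still covers), and that all coefficients appearing in the $E_2$-expansion of $g$ and in the eta-quotient are $p$-integral for $p\ge 5$, so that the mod-$p$ reductions are well defined; the $p$-integrality of the left-hand side is automatic since the $M_{2\ell}(-1,n)$ are integers. Once these routine checks are in place, the congruences follow immediately from \cite{tr}.
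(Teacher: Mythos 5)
Your proposal is correct and follows exactly the route the paper intends: the paper gives no written proof beyond the remark that $E_2\equiv E_{p+1}\pmod p$ makes the quasimodular forms of Theorem~\ref{thm:twist} into $p$-adic modular forms, after which the congruences are ``routine'' via Treneer~\cite{tr}. Your write-up is a faithful and essentially complete expansion of that sketch, including the correct identification of the prefactor $(q)_\infty(-q)_\infty^2=q^{-1/8}\eta(2\tau)^2/\eta(\tau)$ as a holomorphic weight-$\tfrac12$ eta-quotient and the non-nestedness argument via distinct auxiliary primes $Q$.
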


Second, we give precise asymptotics for the twisted 
crank moments $M_{2\ell}(-1,n)$ as $n\to \infty$.  
The asymptotics of $C_{2\ell}(\tau)$ as $\tau$ approaches 
rational numbers yields asymptotic information about $M_{2\ell}(-1,n)$.  
Rather than computing the asymptotics after Taylor expanding the Jacobi form
we compute the asymptotics then Taylor expand. This allows a unified approach toward understanding 
the asymptotics of $M_{2\ell}(-1,n)$ which would otherwise be out of reach. 
This idea was first used 
by the author with Bringmann and Mahlburg
\cite{BMR} to produce precise asymptotics for the moments of the crank and rank 
partition statistics.


Define  
\begin{equation}\label{eqn:alpha}
\alpha(a,b,c):= \frac{(2(a+b+c))! (-1)^{b+c}}{(2a)!(2b)!c! 4^{a+b+c}} E_{2b}
\end{equation}
where $E_{2b}$ is the Euler number given by $\sum_n E_n \frac{x^n}{n!} = \frac{1}{\cosh(x)}$
and for $k\ge 1$ define the Kloosterman sum 
\begin{equation}\label{Kloosterman}
A_{k}(n):= \sqrt{\frac{k}{24}} \sum_{\begin{subarray}{c} x\pmod{24k} \\ x^2 \equiv -24n+1 \pmod{24k}\end{subarray}}
 \(\frac{12}{x}\) e\(\frac{x}{12k}\)
\end{equation}
where the sum runs over residue classes modulo $24k$. 

\begin{theorem}\label{thm:crankAsymptotic} 
As $n\to \infty$ we have 
\begin{align*}
M_{2\ell}(-1,n) = \ &\pi \sum_{1\le k \leq
\sqrt{n}/2 }\frac{(-1)^{k+\lfloor \frac{k+1}{2}\rfloor} A_{2k}\(
n-\frac{k(1+(-1)^k)}{4}\) }{k}\sum_{a+b+c=\ell}(2k)^c
\alpha(a, b, c) \\ & \hspace{1in} \times (24n-1)^{b+\frac{c}{2}-\frac{1}{4}}
I_{\frac{1}{2}-2b-c}\left(\frac{\pi
\sqrt{24n-1}}{12k}\right) +O\Big(n^{2\ell+ \epsilon}\Big)
\end{align*}
where $I_\nu$ is the modified Bessel function of order $\nu$. 
\end{theorem}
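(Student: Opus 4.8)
The plan is to bypass the mixed-weight quasimodular form $C_{2\ell}(-1,q)$ of Theorem~\ref{thm:twist} and instead run a Hardy--Ramanujan--Rademacher circle method on the full two-variable generating function $C(x;q)$, extracting the $2\ell$-th Taylor coefficient in the crank variable only at the very end; this is the strategy of \cite{BMR}. The starting point is the exact automorphic representation coming from the Jacobi triple product,
\[
C(x;q) = \frac{2 q^{1/24}\sin(\pi u)\,\eta(\tau)^2}{\vartheta(u;\tau)}, \qquad x = e^{2\pi i u},\ q = e^{2\pi i \tau},
\]
where $\vartheta$ is the odd Jacobi theta function. This makes the behaviour of $C(x;q)$ near roots of unity completely explicit through the modular transformations $\eta(-1/\tau)=\sqrt{-i\tau}\,\eta(\tau)$ and $\vartheta(u/\tau;-1/\tau)=-i\sqrt{-i\tau}\,e^{\pi i u^2/\tau}\vartheta(u;\tau)$, together with their multiplier systems.

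Specializing to $x=-1$ means $u = \tfrac12 + \tfrac{Z}{2\pi i}$, so that $\sin(\pi u)=\cosh(Z/2)$, and by the computation in Section~\ref{sec:Crank} the twisted moment generating function $C_{2\ell}(-1,q)$ is exactly the coefficient of $Z^{2\ell}/(2\ell)!$ in $C(-e^{Z};q)$. First I would write $M_{2\ell}(-1,n)$ as a Cauchy integral of $C_{2\ell}(-1,q)$ against $q^{-n-1}$ over a circle $|q|=\rho$ with $\rho\to 1^-$, and perform a Farey dissection into arcs centred at the rationals $h/k$. On each arc I set $\tau = \tfrac{h}{k}+\tfrac{iz}{k}$ and send $h/k$ to the cusp at $i\infty$ via the above transformations; the eta and theta multipliers, summed over $h$ coprime to $k$, assemble into the Kloosterman sum $A_{2k}\big(n-\tfrac{k(1+(-1)^k)}{4}\big)$, the character $\big(\tfrac{12}{x}\big)$ and the phase $e(\tfrac{x}{12k})$ being precisely the theta/eta root-of-unity data on $\Gamma_0(2)$; the shift and the doubling $2k$ reflect the level $2$ forced by the $x=-1$ twist.

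The combinatorial constants are produced by expanding in $Z$ after the transformation. Near the cusp the $u$-dependence splits into three factors: the numerator $\sin(\pi u)=\cosh(Z/2)$, the quadratic modular phase $e^{\pi i u^2/\tau}$ (a Gaussian in $Z$), and the reciprocal theta $\vartheta(u;\tau)^{-1}$, whose leading cusp behaviour is a hyperbolic secant and hence carries the Euler numbers $E_{2b}$ from $\operatorname{sech}$. Expanding each of these and collecting the total power $Z^{2\ell}$ yields the three summation indices with $a+b+c=\ell$: the factor $\tfrac{1}{(2a)!\,4^a}$ from $\cosh(Z/2)$, the factor $\tfrac{E_{2b}}{(2b)!\,4^b}$ from the secant, and $\tfrac{(2k)^c}{c!\,4^c}$ from the Gaussian phase (the $(2k)^c$ coming from $\tau^{-c}$ at a width-$k$, level-$2$ cusp), which after multiplication by $(2\ell)!$ reproduces exactly $\alpha(a,b,c)$ together with the overall signs $(-1)^{b+c}$. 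Integrating the resulting local approximations against $q^{-n-1}$ gives Hankel-type integrals that evaluate to the modified Bessel functions $I_{1/2-2b-c}\big(\tfrac{\pi\sqrt{24n-1}}{12k}\big)$ and the prefactor $(24n-1)^{b+c/2-1/4}$, the $24n-1$ being the usual artifact of the $q^{1/24}$ normalization.

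Finally I would truncate the Farey dissection at $k\le \sqrt n/2$ and bound the discarded arcs, the non-principal parts of the theta expansion, and the tail of the $k$-sum, all of which contribute only polynomially and are absorbed into the stated error $O(n^{2\ell+\epsilon})$; since the retained Bessel terms grow like $e^{c\sqrt n}$, this is a genuine asymptotic. I expect the main obstacle to be uniformity: one must control $\vartheta(u;\tau)^{-1}$ and all of the above expansions uniformly for $u$ in a fixed neighbourhood of $\tfrac12$ and simultaneously over the Farey arcs, and verify that extracting the $Z^{2\ell}$ coefficient (i.e.\ differentiating $2\ell$ times in the crank variable) commutes with the circle-method estimates without degrading the error. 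Pinning down the exact constant $\alpha(a,b,c)$ --- in particular matching the secant expansion to the normalization of $E_{2b}$ and tracking every factor of $2$ and $4$ through the level-$2$ transformation --- is the most delicate bookkeeping.
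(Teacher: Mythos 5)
Your proposal is correct and follows essentially the same route as the paper: both specialize the Jacobi-triple-product representation at $u=\tfrac12+\tfrac{Z}{2\pi i}$, apply the $\eta$ and $\vartheta$ transformations on Farey arcs so that the reciprocal theta contributes a hyperbolic secant (hence the Euler numbers), Taylor expand the product $\cos(\pi u)\,e^{\pi k u^2/z}/\cosh(\pi u/z)$ to produce $\alpha(a,b,c)$ with $a+b+c=\ell$, and assemble the multipliers into the Kloosterman sums $A_{2k}$ before integrating to Bessel functions. This is precisely the paper's Proposition \ref{prop:crankMomentAsymp} combined with Theorem \ref{thm:circlemethod}, i.e.\ the method of \cite{BMR}.
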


This theorem and the fact that as $y\to\infty$ we have 
$$I_\nu(y) \sim \frac{1}{\sqrt{2\pi y}} e^{y}$$
readily gives the leading order asymptotics for the twisted crank moment. 
\begin{corollary}
In the notation above, 
$$M_{2\ell}(-1,n)\sim  (-1)^{n}\abs{E_{2\ell}} 2^{\ell-1} 3^\ell  n^{\ell-\frac{1}{2}} e^{\pi \sqrt{\frac{n}{6}}}.$$
\end{corollary}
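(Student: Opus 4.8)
The plan is to start from the exact expansion in Theorem \ref{thm:crankAsymptotic} and extract its leading term. First I would observe that the error term $O(n^{2\ell+\epsilon})$ is only polynomial in $n$, whereas the main sum grows like $e^{\pi\sqrt{n/6}}$, so the error is negligible and may be discarded. Within the main sum, the size of the $k$-th summand is controlled by the exponential growth of the Bessel function: as $y\to\infty$ one has $I_\nu(y)\sim (2\pi y)^{-1/2}e^{y}$ with $y=\frac{\pi\sqrt{24n-1}}{12k}$, and $e^{\pi\sqrt{24n-1}/(12k)}$ is largest when $k$ is smallest. Hence the term $k=1$ dominates every other term by a factor exponentially large in $\sqrt n$, and it suffices to analyze that single summand. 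For it the sign $(-1)^{k+\lfloor (k+1)/2\rfloor}$ equals $+1$, the denominator $k$ equals $1$, the shift $\frac{k(1+(-1)^k)}{4}$ vanishes, and the Kloosterman factor is $A_2(n)$.

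Next I would apply the Bessel asymptotic termwise to the finite inner sum over $a+b+c=\ell$. Multiplying $(24n-1)^{b+\frac c2-\frac14}$ by the factor $(2\pi y)^{-1/2}$, which carries the power $(24n-1)^{-1/4}$, each summand contributes $(24n-1)^{b+\frac c2-\frac12}$ times the common exponential $e^{\pi\sqrt{24n-1}/12}$. Since $b+\frac c2\le a+b+c=\ell$ with equality exactly when $a=c=0$ and $b=\ell$, the multi-index $(0,\ell,0)$ is the unique dominant term, every other choice being smaller by at least a factor $n^{-1/2}$. Thus only $\alpha(0,\ell,0)=\frac{(-1)^\ell}{4^\ell}E_{2\ell}$ survives, with $(2k)^c=1$.

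It then remains to collect the surviving constants. Using $(24n-1)^{\ell-\frac12}\sim 24^{\,\ell-1/2}n^{\ell-1/2}$ and $\frac{\pi\sqrt{24n-1}}{12}\sim\pi\sqrt{n/6}$, the leading factor $\pi$ cancels the $\pi^{-1}$ in $(2\pi y)^{-1/2}=\frac{\sqrt6}{\pi}(24n-1)^{-1/4}$, leaving $\sqrt6$; together with $24^{\ell-1/2}$ and the $4^{-\ell}$ from $\alpha(0,\ell,0)$ this should combine to $\sqrt6\cdot 24^{\ell-1/2}\cdot 4^{-\ell}=2^{\ell-1}3^\ell$, while the sign convention $E_{2\ell}=(-1)^\ell\abs{E_{2\ell}}$ turns $(-1)^\ell E_{2\ell}$ into $\abs{E_{2\ell}}$. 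The only oscillatory $n$-dependence comes from the Kloosterman factor $A_2(n)$, which I would evaluate directly as the finite exponential sum over the eight residues $x\pmod{48}$ solving $x^2\equiv 1-24n\pmod{48}$, pairing $x$ with $48-x$ to reduce each contribution to a cosine and reading off the alternation $A_2(n)\propto(-1)^n$. Assembling these pieces yields the claimed asymptotic.

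The main obstacle is the explicit evaluation of the Kloosterman (Salié-type) sum $A_2(n)$ together with the careful bookkeeping of the many constants: the powers of $2$, $3$, $24$, and $\pi$ arising from the Bessel normalization, from $\alpha(0,\ell,0)$, and from replacing $24n-1$ by $n$ must all conspire to produce exactly $2^{\ell-1}3^\ell$, and the underlying quadratic-residue and Jacobi-symbol computation that supplies the sign $(-1)^n$ is elementary but easy to mishandle. Everything else reduces to standard asymptotic bookkeeping once the $k=1$, $(0,\ell,0)$ term has been isolated.
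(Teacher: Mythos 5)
Your proposal is correct and follows exactly the route the paper intends: the paper derives the corollary in one line from Theorem \ref{thm:crankAsymptotic} together with $I_\nu(y)\sim(2\pi y)^{-1/2}e^{y}$, and your isolation of the $k=1$ term and of the dominant multi-index $(a,b,c)=(0,\ell,0)$, followed by the evaluation of $A_2(n)$ and the bookkeeping $\sqrt{6}\cdot 24^{\ell-\frac{1}{2}}\cdot 4^{-\ell}=2^{\ell-1}3^{\ell}$ with $(-1)^{\ell}E_{2\ell}=\abs{E_{2\ell}}$, is precisely the computation the paper omits. One caveat on the single step you flagged as delicate: with the prefactor $\sqrt{k/24}$ as literally printed in \eqref{Kloosterman}, the sum over the eight residues modulo $48$ evaluates to $A_2(n)=(-1)^{n}\sqrt{2}$, so obtaining the stated constant requires the standard normalization $\tfrac{1}{2}\sqrt{k/12}$ (for which $A_1(n)=1$ and $A_2(n)=(-1)^{n}$) --- a normalization slip in the paper's definition rather than a gap in your argument.
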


This should be compared with  
$$M_{2\ell} (n)  \sim 2^{3\ell-2} 3^{\ell-\frac{1}{2}} (1-2^{1-2\ell})\abs{ B_{2\ell}}\cdot n^{\ell-1} 
e^{\pi \sqrt{\frac{2n}{3}}},$$
 which was first obtained by Bringmann, Mahlburg, and the author in \cite{BMR1}.  
Hence, the twisted moment is of exponentially smaller order of magnitude.  
Also, we see that as $n\to\infty$ the sign of $M_{2\ell}(-1,n)$ depends on the parity of $n$, 
which suggests that this is true for all $n$.  Andrews and Lewis \cite{AL} proved this is the case for $\ell =0$, namely 
$(-1)^n M_0(-1,n) >0$. 

Section \ref{sec:classical} 
contains some preliminary results on the Taylor expansion of the classical Jacobi theta function. 
Section \ref{sec:crank} uses these results to establish Theorems \ref{thm:ag4.8}, \ref{thm:twist}, and
 \ref{thm:crankAsymptotic}.  
The final section gives some 
discussion of the rank partition statistic, which is not covered by the method of this paper. 

\section{Jacobi's Theta Function}\label{sec:classical}
In this section we give two Taylor expansions of the Jacobi theta function.  
The first expansion is with respect to the elliptic variable and the second is with respect 
to a shift of the elliptic variable. 
Jacobi's theta function is given by 
\begin{equation}
\vt(u; \tau) := \sum_{\nu \in \Z + \frac{1}{2}} e^{\pi i\nu^2 \tau+ 2\pi i \nu u} = -2\sin(\pi u) q^{\frac{1}{8}} \prod_{n=1}^\infty 
(1-q^n) (1-x q^n) (1-x^{-1}q^n) 
\end{equation}
where the second equality is Jacobi's triple product identity.
The Dedekind eta function is
$$\eta(\tau):= \sum_{n\in \Z} \(\frac{12}{n}\)q^{n^2/24} = q^{\frac{1}{24}} \prod_{n=1}^\infty
(1-q^n).$$
\begin{proposition}\label{prop:jacobi} With $Z = 2\pi i u$ and $F_\ell$ as defined in \eqref{eqn:Fdef} for even $\ell$ and 0 
otherwise
we have 
$$\vt(u;\tau) = -2\sin(\pi u)  \eta^3(\tau) \exp\( -2 \sum_{\begin{subarray}{c} 
\ell \text{ even} \\ \ell >0 \end{subarray}} \frac{Z^\ell}{\ell!} \Phi_{\ell-1}(\tau) \)$$
and 
$$\vt\(u+\frac{1}{2};\tau\) =  -2\cos(\pi u)  \frac{\eta(2\tau)^2}{\eta(\tau)} \exp\(-2 \sum_{
\begin{subarray}{c} \ell >0\\ \ell \text{ even} \end{subarray}} \frac{Z^\ell}{\ell!} F_{\ell}(\tau) \).$$
\end{proposition}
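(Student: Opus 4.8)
The plan is to derive both expansions directly from Jacobi's triple product identity by passing to logarithms and matching the resulting power series in $Z = 2\pi i u$, using throughout the elementary divisor-sum identity $\sum_{m\ge 1} m^{\ell-1}\frac{q^m}{1-q^m} = \sum_{N\ge 1}\sigma_{\ell-1}(N)q^N = \Phi_{\ell-1}(\tau)$.

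For the first identity I would begin by dividing the product formula for $\vt(u;\tau)$ by $-2\sin(\pi u)\,\eta^3(\tau) = -2\sin(\pi u)\,q^{1/8}\prod_n(1-q^n)^3$. The prefactor and the $q^{1/8}$ cancel exactly, leaving the quotient $\prod_{n\ge 1}\frac{(1-xq^n)(1-x^{-1}q^n)}{(1-q^n)^2}$ with $x=e^Z$. Taking $\log$ and expanding each factor by $\log(1-y)=-\sum_{m\ge 1} y^m/m$, then interchanging the double sum (absolutely convergent for $|q|<1$ and $u$ near $0$), collapses the sum over $n$ via $\sum_{n\ge1}q^{mn}=\frac{q^m}{1-q^m}$ into $-\sum_{m\ge 1}\frac{1}{m}\left(x^m+x^{-m}-2\right)\frac{q^m}{1-q^m}$. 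Substituting $x^m+x^{-m}-2 = 2(\cosh(mZ)-1) = 2\sum_{\ell\ \mathrm{even},\,\ell>0}\frac{(mZ)^\ell}{\ell!}$ and interchanging once more isolates the coefficient of $\frac{Z^\ell}{\ell!}$ as $-2\sum_{m\ge1}m^{\ell-1}\frac{q^m}{1-q^m} = -2\,\Phi_{\ell-1}(\tau)$, which is exactly the claimed exponent.

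For the second identity the key observation is that $u\mapsto u+\tfrac12$ sends $x\mapsto -x$ and $\sin(\pi u)\mapsto \cos(\pi u)$, so the triple product becomes $\vt(u+\tfrac12;\tau) = -2\cos(\pi u)\,q^{1/8}\prod_n(1-q^n)(1+xq^n)(1+x^{-1}q^n)$. A short eta-computation gives $\frac{\eta(2\tau)^2}{\eta(\tau)} = q^{1/8}\frac{\prod_n(1-q^{2n})^2}{\prod_n(1-q^n)}$, so dividing through leaves $\prod_{n\ge1}\frac{(1-q^n)^2(1+xq^n)(1+x^{-1}q^n)}{(1-q^{2n})^2}$. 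As a sanity check, at $x=1$ each factor is $\frac{(1-q^n)^2(1+q^n)^2}{(1-q^{2n})^2}=1$, confirming that the $Z$-independent part of the logarithm vanishes (so only even, positive powers of $Z$ survive, matching the sum in the statement). Repeating the logarithmic expansion, the sign from $\log(1+y)$ now inserts a factor $(-1)^m$, and the coefficient of $-2\frac{Z^\ell}{\ell!}$ becomes $\sum_{m\ge1}(-1)^m m^{\ell-1}\frac{q^m}{1-q^m}$.

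It remains to identify this last sum with $F_\ell(\tau)$. Writing $(-1)^m = 2\cdot[m\ \mathrm{even}] - 1$ splits it as $2\sum_{m\ \mathrm{even}}m^{\ell-1}\frac{q^m}{1-q^m} - \sum_{m\ge1}m^{\ell-1}\frac{q^m}{1-q^m}$; the second piece is $-\Phi_{\ell-1}(\tau)$, and reindexing the first by $m=2m'$ produces $2^\ell\sum_{m'\ge1}m'^{\,\ell-1}\frac{(q^2)^{m'}}{1-(q^2)^{m'}} = 2^\ell\,\Phi_{\ell-1}(2\tau)$, so the total is $F_\ell(\tau)$ for even $\ell$ (with $F$ recorded as $0$ for odd $\ell$, consistently with the statement). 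I expect no single hard step: the main things requiring care are the routine justification of interchanging the absolutely convergent double summations in a neighborhood where $|q|<1$ and $u$ is small, and the bookkeeping of the $(-1)^m$ parity split in the second expansion, which is precisely what converts a single $\Phi_{\ell-1}(\tau)$ into the combination $2^\ell\Phi_{\ell-1}(2\tau)-\Phi_{\ell-1}(\tau)$ defining $F_\ell$.
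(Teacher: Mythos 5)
Your proposal is correct and follows essentially the same route as the paper: divide the triple product by the claimed prefactor, take logarithms, use $x^r+x^{-r}-2=2\sum_{\ell\ \mathrm{even},\,\ell>0}(rZ)^\ell/\ell!$ to extract $-2\Phi_{\ell-1}(\tau)$ as the coefficient of $Z^\ell/\ell!$, and for the shifted case use the parity split $\sum_{n,r}(-1)^r r^{\ell-1}q^{nr}=2^{\ell}\Phi_{\ell-1}(2\tau)-\Phi_{\ell-1}(\tau)$ together with the evaluation $\vartheta(\tfrac12;\tau)=-2\eta(2\tau)^2/\eta(\tau)$. The only cosmetic difference is that you collapse the inner geometric sum to $q^m/(1-q^m)$ where the paper keeps the double sum $\sum_{n,r}q^{nr}$; the content is identical.
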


\begin{proof}
We have \begin{align*}
\log\( -\frac{\vt(u;\tau)}{2 \sin(\pi u) \eta^3(\tau)} \) =& 
\log\(\prod_{n=1}^\infty (1-x q^n) (1-x^{-1}q^n)(1-q^n)^{-2}\) \\
=& - \sum_{n\ge 1} \sum_{r\ge 1} (x^r + x^{-r} - 2) \frac{1}{r} q^{nr} 
= -2\sum_{ \ell \text{ even}, \ell >0} \frac{Z^\ell}{\ell!} \sum_{n, r\ge 1} r^{\ell-1} q^{nr} \\
=& -2 \sum_{\ell \text{ even}, \ell>0} \frac{Z^\ell}{\ell!} \Phi_{\ell-1}(\tau)
\end{align*}

Note that 
$\vartheta(\frac{1}{2}; \tau) = -2 q^{\frac{1}{8}} \prod_{n\ge 1} (1-q^n)(1+q^n)^{2} = -2\frac{\eta(2\tau)^2}{\eta(\tau)}.$
The second result follows from a similar calculation to the previous one and 
\begin{align*}
\sum_{n\ge1} \sum_{r\ge 1} (-1)^r r^{\ell-1}q^{nr} =& 2\sum_{n\ge 1} 
\sum_{r\ge 1, r \text{ even}} r^{\ell-1} q^{nr} - \sum_{n\ge 1} \sum_{r\ge 1}r^{\ell-1} q^{nr} \\
=& 2^\ell \sum_{n, r\ge 1} r^{\ell-1} q^{2nr} - \sum_{n,r\ge 1} r^{\ell-1} q^{nr}
= 2^\ell \Phi_{\ell-1}(2\tau)  - \Phi_{\ell-1}(\tau).
\end{align*}
\end{proof}

\section{The Crank Statistic}\label{sec:crank}
In this section we will give proves of the main theorems of this paper. 
By the definition of $C(u;\tau)$ and the triple product formula
\begin{equation}\label{eqn:crank}
C(u;\tau) = -\frac{2\sin(\pi u) q^{\frac{1}{24}}\eta^2(\tau)}{\vartheta(u;\tau)}.
\end{equation}
The following lemma is useful.
\begin{lemma}\label{lem:basic} As formal power series, let 
$\sum_{r\ge 0} \frac{Z^r}{r!} c_r = \exp\( \sum_{\ell >0} \frac{Z^{\ell}}{\ell!} a_\ell \)$ then
$$c_r = \sum_{0\le s\le r} \frac{r!}{s!}  \sum_{\begin{subarray}{c} i_1+\cdots+i_s = r\\ i_j>0\end{subarray}} 
\frac{1}{i_1! i_2! \cdots i_s!} a_{i_1}a_{i_2} \cdots a_{i_s}.$$
\end{lemma}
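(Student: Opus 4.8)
The plan is to prove the combinatorial identity
\[
c_r = \sum_{0\le s\le r} \frac{r!}{s!} \sum_{\begin{subarray}{c} i_1+\cdots+i_s = r\\ i_j>0\end{subarray}} \frac{1}{i_1!\cdots i_s!} a_{i_1}\cdots a_{i_s}
\]
by direct expansion of the exponential as a formal power series in $Z$. First I would write $\exp\big(\sum_{\ell>0} \frac{Z^\ell}{\ell!} a_\ell\big) = \sum_{s\ge 0} \frac{1}{s!}\big(\sum_{\ell>0} \frac{Z^\ell}{\ell!} a_\ell\big)^s$, using the standard power series for the exponential. The $s=0$ term contributes $1 = c_0$, matching the empty-product convention on the right. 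For $s\ge 1$, expanding the $s$-th power of the inner sum gives
\[
\Big(\sum_{\ell>0} \frac{Z^\ell}{\ell!} a_\ell\Big)^s = \sum_{i_1,\dots,i_s>0} \frac{Z^{i_1+\cdots+i_s}}{i_1!\cdots i_s!} a_{i_1}\cdots a_{i_s},
\]
an ordered sum over $s$-tuples of positive integers.

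Next I would collect terms by the total degree $r = i_1+\cdots+i_s$ in $Z$. Substituting the expansion above into the series for the exponential and interchanging the (finite, for each fixed power of $Z$) order of summation yields
\[
\sum_{s\ge 0} \frac{1}{s!} \sum_{r\ge 0} Z^r \sum_{\begin{subarray}{c} i_1+\cdots+i_s=r\\ i_j>0\end{subarray}} \frac{a_{i_1}\cdots a_{i_s}}{i_1!\cdots i_s!}.
\]
For a fixed $r$, the constraint $i_j>0$ forces $s\le r$, so the inner sum over $s$ is finite and ranges over $0\le s\le r$. Reading off the coefficient of $Z^r$ and comparing with the left-hand side definition $\sum_r \frac{Z^r}{r!} c_r$, I equate coefficients: $\frac{c_r}{r!} = \sum_{0\le s\le r} \frac{1}{s!} \sum_{i_1+\cdots+i_s=r} \frac{a_{i_1}\cdots a_{i_s}}{i_1!\cdots i_s!}$, which after multiplying through by $r!$ is exactly the claimed formula.

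Since everything takes place in the ring of formal power series and the coefficient of each fixed power $Z^r$ involves only finitely many terms, no convergence issues arise and every rearrangement is legitimate. The statement is essentially the multivariate Fa\`a di Bruno / exponential-formula bookkeeping, so I do not anticipate a genuine obstacle; the only point requiring mild care is the interchange of summation over $s$ and over the degree $r$, which is justified precisely because for each target degree $r$ only the terms with $s\le r$ contribute. I would present the proof compactly as the two displayed expansions followed by extraction of the $Z^r$-coefficient.
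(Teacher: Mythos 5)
Your proof is correct: expanding $\exp(\cdot)$ as $\sum_{s\ge 0}\frac{1}{s!}(\cdot)^s$, multiplying out the $s$-th power over ordered tuples $(i_1,\dots,i_s)$ of positive integers, and extracting the coefficient of $Z^r$ (noting $s\le r$ is forced) gives exactly the stated formula. The paper states this lemma without any proof, treating it as standard bookkeeping, and your argument is precisely the routine verification one would supply, so there is nothing to compare beyond noting that you have filled in the omitted details correctly.
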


\begin{proof}[Proof of Theorem \ref{thm:ag4.8}]
Applying Proposition \ref{prop:jacobi}  and Lemma \ref{lem:basic} to \eqref{eqn:crank} yields the result. 
\end{proof}


\begin{proof}[Proof of Theorem \ref{thm:twist}] We have the following Taylor expansion
$$
\cC(Z+\pi i; q) = \sum_{\ell\ge 0} \frac{Z^\ell}{\ell!} \sum_{n\ge 1} (\sum_{m} (-1)^m m^{\ell} M(m,n)) q^n$$
Note that by symmetry $\sum_{m} (-1)^m m^{\ell} M(m,n)$ is zero unless $\ell$ is even.   
Applying Proposition \ref{prop:jacobi}  and Lemma \ref{lem:basic}, we readily deduce the explicit formula for the 
twisted moment generating function. 

By 3.3.8 of \cite{EO} the algebra of quasimodular forms on $\Gamma_0(2)$ is generated by $E_2(\tau)$, $E_2(2\tau)$, 
and $E_4(2\tau)$.  Thus $(q)_\infty^{-1}(-q)_\infty^{-2} C_{2\ell}(-1,q)$ is a quasimodular form on $\Gamma_0(2)$. 
\end{proof}

\subsection{Crank Asymptotics}
In this subsection we use the method from \cite{BMR} to prove Theorem \ref{thm:crankAsymptotic}. We have 
$$
\cC(2\pi i u + \pi i;q ) = 
\frac{2\cos(\pi u) e^{\frac{i\pi \tau}{12} } \eta^2(\tau) }{ \vartheta(u + \frac{1}{2}; \tau)}.$$
To state the modular transformation properties of the twisted crank generating functions we 
denote the inverse of $a$ modulo $b$ by $[a]_b$.  When $2\mid b$ we may assume $[a]_b = [a]_{2b}$. We do 
this implicitly in what follows. 

\begin{proposition}\label{prop:crankMomentAsymp} For $2\mid k$ we have 
\begin{align*}
C_{2\ell}\(-1,e^{\frac{2\pi i}{k}(h+iz)}\) =& i^{\frac{3}{2}} e^{\frac{\pi i}{12 k} (h-[-h]_k)}
\chi^{-1}(h,[-h]_k,k) \cdot (-1)^{\frac{k}{2}} (-1)^{\frac{k+1-h[-h]_k}{2}} 
e^{2\pi i \frac{-h-\frac{k}{2}[-h]_k}{4}} \\
&\times 
e^{\frac{\pi}{12 k}(\frac{1}{z}-z)} \sum_{a+b+c = \ell} k^c \alpha(a,b,c) 
z^{-\frac{1}{2} - 2b-c} + O\(z^{-\frac{1}{2}-2\ell} e^{-\frac{\alpha}{k}\re{\frac{1}{z}}} \)
\end{align*}
for some $\alpha >0$ independent of $k$.
Furthermore, for $2\nmid k$ we have 
$$C_{2\ell}\(-1,e^{\frac{2\pi i }{k}(h+iz)}\) \ll z^{-2\ell -\frac{1}{2}} e^{-\frac{\pi}{6k} \re{\frac{1}{z} } }.$$
\end{proposition}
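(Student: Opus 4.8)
The plan is to follow the ``asymptotics first, Taylor expansion second'' strategy of \cite{BMR} applied to the closed form
$$\cC(2\pi i u + \pi i; q) = \frac{2\cos(\pi u)\, e^{\frac{i\pi\tau}{12}}\, \eta^2(\tau)}{\vartheta\(u + \tfrac12; \tau\)},$$
whose coefficient of $\frac{Z^{2\ell}}{(2\ell)!}$ (with $Z = 2\pi i u$) is exactly $C_{2\ell}(-1,q)$, since $C_{2\ell}(-1,q) = (2\ell)!\,[Z^{2\ell}]\,\cC(2\pi i u+\pi i;q)$. Writing $q = e^{\frac{2\pi i}{k}(h+iz)}$, i.e.\ $\tau = \frac{h+iz}{k}$, I would first determine the behavior of this whole two–variable function as $z\to 0$ and only afterward read off the coefficients in $u$.

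First I would move the cusp $h/k$ to $i\infty$. Choosing $\gamma = \smatr{[-h]_k}{b}{k}{-h}\in \SL_2(\Z)$, with $b$ fixed by $\det\gamma = 1$, a direct computation gives $k\tau - h = iz$ and $\gamma\tau = \tau' := \frac{[-h]_k + i/z}{k}$, so that $\tau'\to i\infty$ and $q':=e^{2\pi i\tau'}\to 0$ as $z\to 0$. Next I would apply the modular transformation laws for $\eta$ and for $\vartheta$ to rewrite $\eta^2(\tau)$ and $\vartheta(u+\tfrac12;\tau)$ in terms of their values at $\tau'$. This produces three ingredients: an automorphy factor equal to $(-i/z)^{1/2}$ (the source of the overall $z^{-1/2}$), a Gaussian factor $\exp\((\ldots)\,k(u+\tfrac12)^2/z\)$ coming from the elliptic transformation of $\vartheta$, and the combined multiplier system of $\eta$ and $\vartheta$, which is precisely the accumulated phase $i^{3/2} e^{\frac{\pi i}{12k}(h-[-h]_k)}\chi^{-1}(h,[-h]_k,k)(-1)^{\ldots}e^{2\pi i(\ldots)}$ appearing in the statement.

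With everything expressed at $\tau'$, the limit $z\to 0$ becomes the limit $q'\to 0$: in $\vartheta(\,\cdot\,;\tau') = \sum_{\nu\in\Z+1/2} e^{\pi i\nu^2\tau' + 2\pi i\nu(\,\cdot\,)}$ the terms $\nu=\pm\tfrac12$ dominate and combine into a $\cosh$, while the terms $|\nu|\ge\tfrac32$ are smaller by a factor $e^{-\frac{\alpha}{k}\re{1/z}}$; this is the source of the error term, the $z^{-1/2-2\ell}$ prefactor being the largest $z$–power occurring at order $2\ell$ (attained when $b=\ell$). Collecting the surviving $u$–dependence, the function factors, to leading order, as a product of three elementary series: the numerator $\cos(\pi u) = \sum_a \frac{Z^{2a}}{(2a)!\,4^a}$ (contributing the index $a$, hence the $(2a)!$, the $4^a$, and no power of $z$); the reciprocal $1/\cosh$ coming from the dominant $\nu=\pm\tfrac12$ terms, whose expansion $\frac{1}{\cosh(x)} = \sum_b E_{2b}\frac{x^{2b}}{(2b)!}$ with $x\propto u/z$ contributes the Euler numbers $E_{2b}$, the $(2b)!$, the power $z^{-2b}$, and the sign $(-1)^b$; and the $u^2$–part of the Gaussian, of the form $\exp\((\ldots)\,k u^2/z\) = \sum_c \frac{1}{c!}\((\ldots)k/z\)^c u^{2c}$, contributing $k^c z^{-c}$, the $c!$, and the sign $(-1)^c$ (its lower–order parts being absorbed into the recentering of the $\cosh$ and into the exponential prefactor). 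Multiplying these three series, extracting the coefficient of $Z^{2\ell}$, and multiplying by $(2\ell)! = (2(a+b+c))!$ reproduces $\sum_{a+b+c=\ell} k^c\,\alpha(a,b,c)\,z^{-\frac12-2b-c}$ with $\alpha$ as in \eqref{eqn:alpha}, while the constant exponential pieces assemble into $e^{\frac{\pi}{12k}(\frac1z - z)}$.

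Finally, the even/odd dichotomy in $k$ enters through the transformation of the half–integral characteristic of $\vartheta(u+\tfrac12;\tau)$ under $\gamma$, which depends on the parity of $k$ (equivalently, on whether $h/k$ represents the cusp of $\Gamma_0(2)$ equivalent to $i\infty$ or to $0$; recall $\vartheta(\tfrac12;\tau) = -2\eta(2\tau)^2/\eta(\tau)$). When $2\mid k$ the characteristic is sent to one for which the two dominant terms $\nu=\pm\tfrac12$ survive and combine into a centered $\cosh$, which both produces the Euler numbers and yields the growth $e^{\frac{\pi}{12k}\re{1/z}}$, giving the full main term. When $2\nmid k$ the leading contribution cancels — the principal part of the relevant eta quotient vanishes — so only the next, exponentially decaying, term survives and gives the stated bound $z^{-2\ell-1/2}e^{-\frac{\pi}{6k}\re{1/z}}$. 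I expect the main obstacle to be the multiplier bookkeeping: tracking the $\eta$–multiplier $\chi$, the $\vartheta$ transformation phases, and the half–integral characteristic through the congruence conditions so as to land on the exact constant in the statement, together with making the even/odd cusp analysis rigorous. By contrast, the analytic core — the $\cos$ / $1/\cosh$ / Gaussian expansion above — is routine once the transformation is in place.
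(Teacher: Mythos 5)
Your proposal is correct and follows essentially the same route as the paper: move the cusp $h/k$ to $i\infty$ via the modular transformations of $\eta$ and $\vartheta$, use the quasi-periodicity of $\vartheta$ to absorb the shift $\frac{i}{2z}=\frac{k}{2}\tau'-\frac{[-h]_k}{2}$ (with the parity of $k$ determining whether a dominant $\cosh$ term survives or the whole expression decays), and then Taylor expand the product $\cos(\pi u)\cdot\cosh(\pi u/z)^{-1}\cdot e^{\pi k u^2/z}$ to produce $\sum_{a+b+c=\ell}k^c\alpha(a,b,c)z^{-2b-c}$. The only cosmetic difference is that you extract the leading behavior of $\vartheta$ at the cusp from its theta series while the paper uses the triple product expansion of $\vartheta^{-1}$; the remaining multiplier bookkeeping you flag is exactly what the paper carries out.
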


\begin{lemma}\label{lem:shift}
For appropriate $a$ and $b$ and $\ell\in \N$ we have 
$$\vt(a+\ell b; b) = (-1)^\ell e^{-\pi i \ell^2 b - 2\pi i \ell a} \vt(a;b).$$
\end{lemma}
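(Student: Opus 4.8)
The plan is to prove the quasi-periodicity relation $\vt(a+\ell b; b) = (-1)^\ell e^{-\pi i \ell^2 b - 2\pi i \ell a} \vt(a;b)$ directly from the series definition of the theta function, exploiting an index shift that is invariant on the set $\Z + \tfrac{1}{2}$. First I would recall the defining series
\begin{equation*}
\vt(a+\ell b; b) = \sum_{\nu \in \Z + \frac{1}{2}} e^{\pi i \nu^2 b + 2\pi i \nu (a + \ell b)},
\end{equation*}
treating the second slot $b$ as playing the role of the modular variable $\tau$ (so that the lemma is the statement that shifting the elliptic variable by an integer multiple $\ell b$ of the lattice period $b$ reproduces $\vt$ up to an explicit automorphy factor).

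The key step is to complete the square in the exponent so as to absorb the shift into the summation index. Expanding,
\begin{equation*}
\pi i \nu^2 b + 2\pi i \nu(a + \ell b) = \pi i (\nu + \ell)^2 b + 2\pi i (\nu + \ell) a - \pi i \ell^2 b - 2\pi i \ell a,
\end{equation*}
which one checks by expanding $\pi i (\nu+\ell)^2 b = \pi i \nu^2 b + 2\pi i \nu \ell b + \pi i \ell^2 b$ and $2\pi i(\nu+\ell)a = 2\pi i \nu a + 2\pi i \ell a$. Here the crucial arithmetic input is that $\ell \in \N$ is an \emph{integer}, so the substitution $\mu := \nu + \ell$ sends the index set $\Z + \tfrac{1}{2}$ bijectively onto itself. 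After factoring out the two $\nu$-independent terms $e^{-\pi i \ell^2 b - 2\pi i \ell a}$ and reindexing the sum by $\mu$, the remaining sum is exactly $\vt(a; b)$, which would complete the proof modulo one subtlety.

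That subtlety is the appearance of the factor $(-1)^\ell$ in the claimed identity, which the naive square-completion above does \emph{not} produce. I expect this to be the main obstacle, and I suspect it arises from the half-integer shift implicit in the ``appropriate $a$ and $b$'' hypothesis rather than from an integer shift of the period. The resolution is that the relevant automorphy must be applied with a half-integer characteristic: writing $\nu = n + \tfrac{1}{2}$ with $n \in \Z$, the genuine period of the theta \emph{null} is governed by the lattice $\Z + \tfrac{1}{2}b$ twisted by the theta characteristic, and translating the elliptic variable through the half-period contributes a sign $e^{\pi i \ell} = (-1)^\ell$ from the characteristic. Concretely, the cleanest route is to induct on $\ell$: establish the base case $\vt(a+b;b) = -e^{-\pi i b - 2\pi i a}\vt(a;b)$ by the single-step square completion together with the characteristic sign, and then iterate, checking that the accumulated automorphy factors multiply to $(-1)^\ell e^{-\pi i \ell^2 b - 2\pi i \ell a}$ via the telescoping identity $\sum_{j=0}^{\ell-1}(2j+1) = \ell^2$. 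The forward-looking caution is to keep careful track of which slot carries the $\tfrac{1}{2}$ offset so that the sign and the $e^{-\pi i \ell^2 b}$ term are assigned correctly; once the one-step relation is pinned down, the induction is routine.
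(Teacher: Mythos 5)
Your final route---prove the one-step relation $\vt(a+b;b) = -e^{-\pi i b - 2\pi i a}\vt(a;b)$ and induct, the accumulated factors matching via $\sum_{j=0}^{\ell-1}(2j+1)=\ell^2$---is exactly the paper's proof, which is the single sentence that the lemma follows by induction on that identity. You are also right that the naive square-completion applied to the displayed series $\sum_{\nu\in\Z+\frac{1}{2}} e^{\pi i \nu^2\tau + 2\pi i\nu u}$ fails to produce the sign: that display is missing the theta characteristic (it should read $e^{2\pi i\nu(u+\frac{1}{2})}$, as forced by the accompanying triple-product formula, whose right-hand side vanishes at $u=0$ while the literal series does not), and with the corrected definition the extra factor $e^{-2\pi i\ell\cdot\frac{1}{2}}=(-1)^\ell$ falls out of the very same square completion, so your direct one-shot argument then works with no induction needed.
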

\begin{proof}
The proof follows from induction on $\vt(a+b; b) = - e^{-\pi i b - 2\pi i a}\vt(a;b)$. 
\end{proof}

\begin{proof}[Proof of Proposition \ref{prop:crankMomentAsymp}]
Using the modular transformations for the Dedekind $\eta$-function and the 
Jacobi $\vt$-function, namely  
\begin{align}
\eta\(\frac{1}{k}(h+iz)\) =& \sqrt{\frac{i}{z}} \chi(h, [-h]_k,k) \eta\(\frac{1}{k}\([-h]_k + \frac{i}{z}\)\) \\
\vt\(u; \frac{1}{k}(h+iz)\) =& \sqrt{\frac{i}{z}} \chi^3(h,[-h]_k,k) e^{-\frac{\pi k u^2}{z}} \vt\(\frac{iu}{z}; \frac{1}{k}\(
[-h]_k +\frac{i}{z}\)\)
\end{align}
where $$\chi(h,[-h]_k,k) = i^{-\frac{1}{2}} \omega_{h,k}^{-1} e^{-\frac{\pi i}{12 k}([-h]_k -h)}$$
and $[-h]_k$ is the multiplicative inverse of $-h$ modulo $k$ and $\omega_{h,k}:= \exp(\pi i s(h,k))$.  Here, $s(h,k)$ is 
the usual Dedekind sum.  See \cite{knopp} and \cite{BMR} and the references therein for more on these 
transformation formulae. 
Hence
\begin{equation}\label{eqn:cranktrans} 
\cC(2\pi i u + \pi i ; e^{\frac{2\pi i}{k} (h+iz)}) = 2 \cos(\pi u) \chi^{-1}(h,[-h]_k, k)
\sqrt{\frac{i}{z}} \frac{\eta^2\(\frac{1}{k}([-h]_k + \frac{i}{z})\)^2 e^{\frac{\pi k (u+\frac{1}{2})^2}{z}}}{
\vt\(\frac{i(u+1/2)}{z} ; \frac{1}{k}([-h]_k + \frac{i}{z})\)} e^{\frac{\pi i}{12 k}(h+iz)}.
\end{equation}
Set $\tau =\frac{1}{k}\([-h]_k + \frac{i}{z}\)$ and write 
$$\vt\(\frac{i(u+1/2)}{z} ;\tau \) = 
\vt\(\frac{iu}{z} - \frac{[-h]_k}{2} + \frac{k}{2} \tau ; \tau\)$$

In the case $2\mid k$ set $\ell = \frac{k}{2}$. Applying Lemma \ref{lem:shift} we have 
$$\vt\(\frac{iu}{z}  - \frac{[-h]_k}{2} + \ell \tau ; \tau\) = (-1)^\ell e^{-\pi i \ell^2 \tau - 2\pi i \ell
\(\frac{iu}{z} - \frac{[-h]_k}{2}\)} \vt\( \frac{iu}{z} - \frac{[-h]_k}{2}; \tau\).$$
To calculate the asymptotic we use the triple product identity and abuse notation by setting 
$q = e^{2\pi i \tau}$ and $x = e^{-\frac{2\pi u}{z} - \pi i [-h]_k}$.
Then we have 
\begin{align*}
\vt\(\frac{iu}{z} - \frac{[-h]_k}{2} ; \tau\)^{-1} 
=& -\frac{q^{-\frac{1}{8}}}{2\sin\( \frac{iu\pi}{z} - \frac{\pi [-h]_k}{2}\)} \prod_{n\ge 1} (1-q^n)^{-1} 
(1-xq^n)^{-1} (1-x^{-1}q^n)^{-1} \\
=& -\frac{q^{-\frac{1}{8}}}{2 \(\frac{[-h]_k}{4}\) \cosh(\frac{u\pi}{z})} + \sum_{r\ge 0} a_r(z) \frac{u^r}{r!}
\end{align*}
with $a_r(z) \ll \abs{z}^{-r} e^{-\frac{7\pi}{4k} \re{\frac{1}{z}}}$ and $\(\frac{\cdot}{\cdot}\)$ is the Kronecker symbol.

Combining this with \eqref{eqn:cranktrans} we have 
\begin{align*}
\cC\(2\pi i u+\pi i; e^{\frac{2\pi i}{k}(h+iz)}\) =& 
-\frac{\cos(\pi u)}{\cosh\(\frac{\pi u}{z}\)} e^{-\frac{\pi i}{4k}([-h]_k + \frac{i}{z})} \sqrt{\frac{i}{z}}
e^{\frac{\pi i}{12k}(h+iz)} e^{\frac{\pi i}{6k}([-h]_k + \frac{i}{z})} \chi^{-1}(h,[-h]_k, k) \\ &\times  
e^{\frac{\pi k}{z}(u^2+u+\frac{1}{4})} (-1)^{\frac{k}{2}} e^{\frac{\pi i}{k}\(\frac{k}{2}\)^2 ([-h]_k + \frac{i}{z})
+ \pi i k (\frac{iu}{z} - \frac{[-h]_k}{2})} \(\frac{[-h]_k}{2}\)  +\sum_{r=0}^\infty a_r(z) \frac{u^r}{r!}
\\
=& -i^{\frac{3}{2}} e^{\frac{\pi i}{12 k}\( h-[-h]_k\)} \chi^{-1} (h,[-h]_k,k) 
(-1)^{\frac{k}{2}} i^{-1} \(\frac{-h}{4}\)e^{-\frac{\pi i}{4}k [-h]_k}
z^{-\frac{1}{2}} e^{\frac{\pi}{12 k}\(\frac{1}{z}-z\)} \\
&\times  \frac{\cos(\pi u) e^{\frac{\pi k u^2}{z}}}
{\cosh\(\frac{\pi u}{z}\)}
 +\sum_{r=0}^\infty a_r(z) \frac{u^r}{r!},
\end{align*}
where we have used the fact that $-h \equiv [-h]_k \pmod{4}$ with 
$a_r(z) \ll z^{-\frac{1}{2}-r} e^{-\frac{\alpha}{k} \re{\frac{1}{z}}}$ for some $\alpha >0$. For later reference, using  
$1-h[-h]_k \equiv 0 \pmod{2}$, we note that 
\begin{equation}
(-1)^{\frac{k}{2}} i^{-1} \(\frac{-h}{4}\)e^{-\frac{\pi i}{4}k [-h]_k} = (-1)^{\frac{k+1-h[-h]_k}{2}} 
e^{2\pi i \frac{-h-\frac{k}{2}[-h]_k}{4}}.
\end{equation}

Taylor expanding those functions depending on $u$, namely, 
$$\cos(\pi u) = \sum_{a\ge 0} \frac{(2\pi i u)^{2a}}{(2a)! 2^{2a}}\ \ \  \text{   and   } \ \ \ 
\frac{1}{\cosh\(\frac{\pi u}{z}\)} = \sum_{b\ge 0} E_{2b} \frac{(2\pi i u)^{2b} (-1)^b}{z^{2b} (2b)! 2^{2b}}$$
where $E_{2b}$ is the Euler number, we have 
$$\frac{\cos(\pi u)}{\cosh\(\frac{\pi u}{z}\)} e^{\frac{\pi k u^2}{z}} 
 = \sum_{\ell \ge 0} \frac{(2\pi i u)^{2\ell}}{(2\ell)!} 
\sum_{a + b +c= \ell} k^c \alpha(a,b,c)z^{-2b-c}.$$
Thus, we finish the proof in the case $2\mid k$.

In the case when $2\nmid k$ we write $k = 2\ell+1$ with $\ell\ge 0$.
Then $$\vt\(\frac{iu}{z} + \frac{i}{2z}; \tau\) = (-1)^\ell e^{-\pi i (\ell^2+\ell)\tau - 2\pi i \ell (\frac{iu}{z}
-\frac{[-h]_k}{2})} \vt\(\frac{iu}{z} - \frac{[-h]_k}{2} + \frac{\tau}{2} ; \tau\).$$
Again, with $x = e^{2\pi i \(\frac{iu}{z} - \frac{[-h]_k}{2} \)}$ and $q = e^{2\pi i \tau}$,
\begin{align*}
\vt\(\frac{iu}{z} +\frac{i}{2z} ; \tau\)^{-1}=& (-1)^{\ell+1}
q^{\frac{\ell^2 + \ell}{2} + \frac{1}{8}} x^{\frac{2\ell+1}{2}} \prod_{n=1}^\infty (1-q^n)^{-1} (1-xq^{n-\frac{1}{2}})^{-1}
(1-x^{-1}q^{n-\frac{1}{2}})^{-1}.
 \end{align*} 
 Combining this with \eqref{eqn:cranktrans} and 
Taylor expanding in $u$, as above, we obtain the result.
\end{proof}

We follow \cite{BMR} closely and give the following general result 
which yields the proof of Theorem \ref{thm:crankAsymptotic}.
Assume that
\begin{equation*}
F_{r,\ell}\(e^{ \frac{2 \pi i}{k}(h+iz) }\) = \sum_n c_{r,\ell}(n)e^{\frac{2
\pi in}{k}(h+iz) }
\end{equation*}
is a holomorphic function of $z$  satisfying
\begin{align*}\label{eqn:Fasymptotic}
F_{r,\ell}\(e^{\frac{2\pi i}{k}(h+iz)}\) =&
 -i^{\frac{3}{2}} e^{\frac{\pi i}{12 k} (h-[-h]_k)}
\chi^{-1}(h,[-h]_k,k) \cdot (-1)^{\frac{k+1-h[-h]_k}{2}} 
e^{2\pi i \frac{-h-\frac{k}{2}[-h]_k}{4}} \\
&\times 
e^{\frac{\pi}{12 k}(\frac{1}{z}-z)} \sum_{a+b+c = \ell} (rk)^c \alpha(a,b,c) 
z^{-\frac{1}{2} - 2b-c} + E_{r,\ell, k}(z)(z) 
\end{align*}
and for $2\mid k$ 
\begin{equation}\label{eqn:Fasymptotic2}
F_{r,\ell}\(e^{\frac{2\pi i}{k}(h+iz)}\)  \ll E_{r,\ell, k}(z)
\end{equation}
with $E_{r,\ell,k}(z) \ll_{r,\ell} z^{-\frac{1}{2}-2\ell} e^{-\frac{\beta}{k} \re{\frac{1}{z}}}$
for some $\beta>0$ 
with 
$\alpha(a,b,c)$  defined as in \eqref{eqn:alpha}.

\begin{theorem}\label{thm:circlemethod}
With $F_{r,\ell}$ and $c_{r,\ell}$ as above
we have
\begin{align*}
c_{r,\ell}(n)=&\pi \sum_{1\le k \leq
\sqrt{n}/2 }\frac{(-1)^{k+\lfloor \frac{k+1}{2}\rfloor} A_{2k}\(
n-\frac{k(1+(-1)^k)}{4}\) }{k}\sum_{a+b+c=\ell}(2kr)^c
\alpha(a, b, c) \\ & \hspace{1in} \times (24n-1)^{b+\frac{c}{2}-\frac{1}{4}}
I_{\frac{1}{2}-2b-c}\left(\frac{\pi
\sqrt{24n-1}}{12k}\right) +O\Big(n^{2\ell+ \epsilon}\Big),
\end{align*}
where $A_k(n)$ is the Kloosterman sum defined in \eqref{Kloosterman} and
$\epsilon>0$. 
\end{theorem}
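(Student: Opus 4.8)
The plan is to run the Hardy--Ramanujan--Rademacher circle method, following \cite{BMR}. Since $F_{r,\ell}(q)=\sum_n c_{r,\ell}(n)q^n$ is holomorphic in $|q|<1$, Cauchy's theorem expresses the coefficients as
$$c_{r,\ell}(n)=\int_0^1 F_{r,\ell}\(e^{2\pi i(\phi+i/N^2)}\)\, e^{-2\pi i n(\phi+i/N^2)}\, d\phi, \qquad N:=\lfloor\sqrt n\rfloor .$$
First I would perform a Farey dissection of $[0,1]$ into arcs indexed by the fractions $h/k$ of order $N$, and on the arc about $h/k$ change variables via $\tau=\frac1k(h+iz)$, so that $\re{z}=k/N^2$ and the arc becomes a short vertical segment in the $z$-plane. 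On each arc I insert the prescribed local behaviour \eqref{eqn:Fasymptotic}, splitting $F_{r,\ell}$ into its explicit principal part and the error $E_{r,\ell,k}$.

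The arcs with $2\nmid k$ enter only through the bound \eqref{eqn:Fasymptotic2} and are relegated to the error term; the main contribution comes from the arcs with $2\mid k$. Writing $k=2k'$ produces the range $1\le k'\le\sqrt n/2$ of the theorem. For fixed $k$ I would sum the $h$-dependent data — the multiplier $\chi^{-1}(h,[-h]_k,k)$, the sign $(-1)^{(k+1-h[-h]_k)/2}$, the exponential in $-h-\frac{k}{2}[-h]_k$, and the phase $e^{-2\pi inh/k}$ coming from $e^{-2\pi in\tau}$ — over the admissible residues $h$. The key claim is that this assembles exactly into the Kloosterman sum $A_{2k'}$ of \eqref{Kloosterman}, with the argument shifted to $n-\frac{k'(1+(-1)^{k'})}{4}$ and prefactor $(-1)^{k'+\lfloor(k'+1)/2\rfloor}$; reconciling the Dedekind sums hidden in $\omega_{h,k}$ with the Kronecker symbol $\(\frac{12}{x}\)$ is the main arithmetic bookkeeping.

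With the $h$-sums collected, each term of $\sum_{a+b+c=\ell}$ leaves an integral of the shape $\int z^{-1/2-2b-c}\exp\(\frac{\pi}{12k}\(\frac1z-z\)\)\,e^{2\pi nz/k}\,dz$. Combining the two exponentials gives $\exp\(\frac{\pi}{12k}\(\frac1z+(24n-1)z\)\)$, which is where the quantity $24n-1$ and the Bessel argument $\frac{\pi\sqrt{24n-1}}{6k}=\frac{\pi\sqrt{24n-1}}{12k'}$ originate (after $k=2k'$). I would complete each arc to the standard Hankel-type contour and recognise the modified Bessel function $I_{\nu}$ with $\nu=\frac12-2b-c$ through its integral representation, the error from closing the contour being controlled by routine bounds on the incomplete Bessel integral. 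Matching the exponents of $24n-1$ and the powers of $z$ then reproduces the factors $(24n-1)^{b+c/2-1/4}$ and $(2kr)^c\alpha(a,b,c)$ in the statement.

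The hard part will be the error analysis. One must show that the aggregate of the principal-part errors $E_{r,\ell,k}$ over all major arcs, the minor-arc (odd $k$) contributions, and the cost of completing the Bessel contours together amount to $O(n^{2\ell+\epsilon})$. This rests on the uniform bound $E_{r,\ell,k}(z)\ll z^{-1/2-2\ell}e^{-\frac{\beta}{k}\re{1/z}}$ from \eqref{eqn:Fasymptotic}, the elementary estimate $A_k(n)\ll k^{1/2+\epsilon}$ for the Kloosterman sums, and the count of $O(N^2)$ Farey arcs. Since the main term is of exponential size while the claimed error is merely polynomial in $n$, the delicate point is to organise the estimates so that the exponentially decaying factor $e^{-\frac{\beta}{k}\re{1/z}}$ on each arc genuinely dominates the exponential growth elsewhere, uniformly in $k$; verifying this uniformity is the principal technical obstacle.
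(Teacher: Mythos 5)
Your outline follows exactly the route the paper takes: it invokes the circle method of Theorem 4.1 of \cite{BMR} verbatim, and the one point you flag as ``the main arithmetic bookkeeping'' --- assembling the $h$-sum of multipliers $\omega_{h,k}$, signs, and phases into the Kloosterman sum $A_{2k}$ with shifted argument --- is precisely the single new ingredient the paper records, namely the identity \eqref{eqn:multiplierIdentity}. Your proposal is a correct sketch of the same argument, including the correct provenance of $24n-1$, the Bessel order $\tfrac12-2b-c$, and the role of the uniform bound on $E_{r,\ell,k}$ in the error analysis.
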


The proof is nearly identical to that of Theorem 4.1 of \cite{BMR}, so we do not include it here. 
One slight difference is that we must appeal to  
\begin{equation}\label{eqn:multiplierIdentity}
(-1)^{c+\lfloor \frac{c+1}{2}\rfloor} A_{2c}\(
n-\frac{c(1+(-1)^c)}{4}\) = \sum_{h\pmod{2c}} \omega_{h,2c}(-1)^{\frac{2c+1-h[-h]_{2c} }{2}} e\(\frac{-h-c[-h]_{2c} }{4}
-\frac{nh}{2c}\)
\end{equation}
where
$$\omega_{h,k} = i^{-\frac{1}{2}} \chi(h,[-h]_k,k)^{-1} e^{-\frac{\pi i}{12k}([-h]_k - h)}.$$ 
Equation \eqref{eqn:multiplierIdentity} follows from 
$$(-1)^{\lfloor \frac{c+1}{2}\rfloor} A_{2c}\(
n-\frac{c(1+(-1)^c)}{4}\) = \sum_{d\pmod{2c}} \omega_{-d,2c}(-1)^{\frac{2c+1+ad}{2}} e\(\frac{a-3dc}{4}+\frac{nd}{2c}\),$$
where $ad\equiv 1 \pmod{2c}$ 
which can be found in \cite{BO}.

\section{The Rank Statistic}\label{sec:rank}
A second partition statistic is the rank statistic of Dyson \cite{Dy44}.  The rank statistic is of great interest, but is not covered by the results presented here.   
Zwegers \cite{Zwegers} proved that  the
two variable generating 
function,  $R(x;q)$, for the rank is a mock Jacobi form. 
For example,  
the mock theta function $f(q) = R(-1;q)$  was introduced by Ramanujan in his final letter to Hardy. 
This function and the other mock theta functions have a long history (see Zagier's Bourbaki lecture \cite{Zag09}). 
Recently the work of Zwegers was used by Bringmann and Ono \cite{BO, BO2} to provide a thorough study of this 
function.

The Taylor coefficients with respect to the $x$ variable of $R(x;q)$ are not quasimodular forms, 
but instead are quasimock modular forms \cite{BGM}.  
As a result, the automorphic properties of the rank moment generating functions are more complicated 
than those for the crank.
An extensive study, relying on the so-called ``Rank-Crank PDE'' of Atkin and Garvan
\cite{AG03}, was completed by Bringmann, Garvan and Mahlburg
\cite{BGM}.  However, the perspective of Taylor coefficients of Jacobi forms 
simplifies some elements of that study and 
makes further study accessible. 
For instance, it would be of interest to understand these forms explicitly.

The Fourier coefficients of $f(q) = R(-1;q)$ should be compared with the 
the twisted crank discussed in this note. The perspective of Taylor coefficients 
makes a full comparison of the twisted crank and rank
moments, along the lines of that given in \cite{BMR} possible. It would be of additional interest 
to have a  combinatorial or $q$-series understanding of these twisted moments, similar to that given by
Andrews \cite{And07} and Garvan \cite{Garvan}, for the rank and crank moments.

\section*{Acknowledgements}
This work was completed during a visit to Mathematical Sciences Research Institute in Berkeley.  The author thanks
MSRI for a hospitable work environment.   The author thanks Karl Mahlburg for helpful discussions.  Finally the author
thanks Jeff Lagarias for many comments and discussions about the paper.

\end{document}